\documentclass[11pt]{article}

\usepackage[a4paper,margin=1in]{geometry}
\usepackage{amsmath,amssymb,amsthm,mathtools}
\usepackage{enumitem}
\usepackage{microtype}
\usepackage{cite}
\usepackage[colorlinks=true,linkcolor=blue,citecolor=blue,urlcolor=blue]{hyperref}

\newtheorem{theorem}{Theorem}[section]
\newtheorem{lemma}[theorem]{Lemma}

\newtheorem{claim}[theorem]{Claim}

\theoremstyle{definition}
\newtheorem{definition}[theorem]{Definition}
\theoremstyle{remark}

\newcommand{\eps}{\varepsilon}
\newcommand{\calC}{\mathcal C}

\newcommand{\calL}{\mathcal L}
\newcommand{\calF}{\mathcal F}

\title{On the codegree threshold for Hamilton $\ell$‑cycles in $k$‑uniform hypergraphs\thanks{Supported by  National Key Research and Development Program of China 2023YFA1010203}}
\author{
  Hongliang Lu\thanks{
    School of Mathematics and Statistics,
    Xi'an Jiaotong University.
    Email: \texttt{luhongliang215@sina.com}.
  }
  \and
  Feihong Yuan\thanks{
    School of Mathematics and Statistics,
    Xi'an Jiaotong University.
    Email: \texttt{fhyuan1@gmail.com}.
  }
}
\date{}
\begin{document}

\maketitle

\begin{abstract}
        In this note, we resolve the remaining open case of a conjecture by Han and Zhao concerning the codegree threshold for Hamilton $\ell$-cycles in $k$-uniform hypergraphs. Specifically, we prove that for integers $k\ge 3$, $3k/4\le \ell<k$, with $k\not\equiv 0 \pmod{k-\ell}$, and for all sufficiently large $n$ divisible by $k-\ell$, every $n$-vertex $k$-uniform hypergraph $H$ satisfying  
\[
\delta_{k-1}(H)\ge \frac{n}{(k-\ell)\left\lceil \frac{k}{k-\ell}\right\rceil}
\]  
contains a Hamilton $\ell$-cycle. Our proof builds on the framework of Gan, Han and Xu, and refines their argument to obtain, at the exact threshold, the required family of paths.
\end{abstract}

\section{Introduction}
The Hamilton cycle problem is one of the central problems in graph theory. The celebrated theorem of Dirac~\cite{Dirac1952} asserts that every $n$-vertex graph with minimum degree at least $n/2$ contains a Hamilton cycle.  Over the last several decades, this classical theorem has been extended to the setting of uniform hypergraphs. 

A \emph{\(k\)-uniform hypergraph,} (or \emph{\(k\)-graph}), is a pair \(H=(V,E)\) where \(E\subseteq \binom{V}{k}\). For \(U\subseteq V(H)\), let \(H[U]\) denote the induced subhypergraph on \(U\), whose edge set is \( \left\{ e\in \binom{U}{k} : e\in E(H) \right\}.\) For a \((k-1)\)-subset \(S\subseteq V\), the \emph{degree} of \(S\) in \(H\) is defined as  
\[
\deg_H(S)=\bigl|\{v\in V\setminus S : S\cup\{v\}\in E(H)\}\bigr|.
\]  
The \emph{minimum codegree} of \(H\), denoted by \(\delta_{k-1}(H)\), is the minimum of \(\deg_H(S)\) over all \((k-1)\)-subsets \(S\subseteq V\).

Let \(1\le \ell<k\). An \emph{\(\ell\)-path} is a \(k\)-graph whose vertices admit a linear ordering such that each edge consists of \(k\) consecutive vertices, and any two consecutive edges intersect in exactly \(\ell\) vertices. In such an ordering, the ordered \(\ell\)-sets formed by the first and last \(\ell\) vertices in reverse order are called the \emph{ends} of the \(\ell\)-path. An {\emph{\(\ell\)-cycle} is defined analogously using a cyclic ordering, where again each edge consists of \(k\) consecutive vertices and consecutive edges share precisely \(\ell\) vertices. A \emph{Hamilton \(\ell\)-cycle} is an \(\ell\)-cycle containing every vertex of the hypergraph. In the case \(\ell=k-1\), an \(\ell\)-path (resp. \(\ell\)-cycle) is referred to as a \emph{tight path} (resp. \emph{tight cycle}).

For tight Hamilton cycles, Katona and Kierstead~\cite{KatonaKierstead1999} proved that a minimum codegree of at least $(1-\frac{1}{2k})n+4-k-\frac{5}{2k}$ is sufficient, and they conjectured the optimal bound to be $(n-k+3)/2$.  R\"odl, Ruci\'nski, and Szemer\'edi~\cite{RodlRucinskiSzemeredi2006,RodlRucinskiSzemeredi2008,RodlRucinskiSzemeredi2011} established the asymptotic threshold $(1/2+o(1))n$ for tight Hamilton cycles, and for $k=3$ they obtained the exact value $\lfloor n/2\rfloor$.  Markstr\"om and Ruci\'nski~\cite{MarkstromRucinski2011} showed that the same asymptotic threshold $n/2$ is tight for every $\ell$ satisfying $k= 0 \pmod{k-\ell}$.  When $k\not\equiv 0 \pmod{k-\ell}$, the threshold is smaller: H\`an and Schacht~\cite{HanSchacht2010} proved the asymptotic bound $(1/(2(k-\ell))+o(1))n$ for $1\le \ell<k/2$, and K\"uhn, Mycroft, and Osthus~\cite{KuhnMycroftOsthus2010} established the general asymptotic threshold $n/(\left\lceil k/(k-\ell)\right\rceil (k-\ell))$.

R\"odl and Ruci\'nski~\cite{RodlRucinski2010} raised the question of determining the exact minimum codegree threshold that forces a Hamilton $\ell$-cycle in $k$-graphs.  Several partial results on this problem have since been obtained. For the tight 3-uniform case \((k,\ell)=(3,2)\), R\"odl, Ruciński, and Szemerédi~\cite{RodlRucinskiSzemeredi2011} established the exact threshold.  The loose 3-uniform case \((k,\ell)=(3,1)\) was settled by Czygrinow and Molla~\cite{CzygrinowMolla2014} who showed that the threshold is \(\lceil n/4\rceil\). 
 Han and Zhao~\cite{HZ}  resolved the full range \(k\ge 3\) and \(1\le \ell<k/2\), proving the sharp bound \(\left\lceil n/(2(k-\ell))\right\rceil\). 
The boundary case \(\ell=k/2\) falls into the divisibility regime: Garbe and Mycroft~\cite{GarbeMycroft2018} determined the exact threshold for \((k,\ell)=(4,2)\), and Hàn, Han, and Zhao~\cite{HanHanZhao2022} determined the exact threshold for all even $k\ge6$.  For further results on Hamilton cycles and other spanning structures in hypergraphs, see~\cite{AlonSpencer2000,BastosMotaSchachtSchnitzerSchulenburg2017Approx,BastosMotaSchachtSchnitzerSchulenburg2018Sharp,BermondGermaHeydemannSotteau1978,BussHanSchacht2013,ChengHanWangWangYang2025,FurediJiangKostochkaMubayiVerstraete2020,GaoHanZhao2019,GlebovPersonWeps2012,HanSunWang2025,HanZhao2015Vertex,HanZhao2016,KeevashKuhnMycroftOsthus2011,KuhnOsthus2006,KuhnOsthus2014,LangSanhuezaMatamala2022,LuSzekely2007,ReiherRodlRucinskiSchachtSzemeredi2019,Szemeredi1978} and the references therein.

Han and Zhao~\cite{HZ} further proposed a conjecture addressing the non-divisibility case. 
Specifically, for integers \(k\ge 3\) and \(k/2<\ell<k\) with \(k\not\equiv 0 \pmod{k-\ell}\), the conjecture asserts that every sufficiently large \(n\)-vertex \(k\)-graph, where \(n\) is divisible by \(k-\ell\), satisfies the following: if  
\[\delta_{k-1}(H)\ge \frac{n}{\left\lceil \frac{k}{k-\ell} \right\rceil (k-\ell)},
\]  
 then \(H\) contains a Hamilton \(\ell\)-cycle. 
Recently, Gan, Han, and Xu~\cite{GHX} made substantial progress towards this conjecture. They proved the exact threshold for the range \(k/2<\ell<3k/4\), and further showed that for all \(k/2<\ell<k\) with \(k\not\equiv 0 \pmod{k-\ell}\), the same bound, up to an additive term of \(k^2/2\), is sufficient. Consequently, the only remaining case for which the exact threshold is yet to be determined is the range \(3k/4\le \ell<k\) under the non-divisibility condition \(k\not\equiv 0 \pmod{k-\ell}\).  
Our main result resolves this remaining case.

\begin{theorem}\label{thm:main}
Let $k$ and $\ell$ be positive integers satisfying 
\[
k\ge 3,\qquad \frac{3k}{4}\le \ell<k,\qquad k\not\equiv 0 \pmod{k-\ell}.
\]
Then there exists an integer $n_0=n_0(k,\ell)$ such that for every integer $n\ge n_0$ with $n\equiv 0 \pmod{k-\ell}$, every $k$-uniform hypergraph $H$ on $n$ vertices with 
\[
\delta_{k-1}(H)\ge \frac{n}{(k-\ell)\left\lceil \frac{k}{k-\ell}\right\rceil}
\]
contains a Hamilton $\ell$-cycle.
\end{theorem}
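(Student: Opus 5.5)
We follow the absorbing framework of Gan, Han and Xu~\cite{GHX}, which splits the problem into an extremal and a non-extremal case. Write $a=\lceil k/(k-\ell)\rceil$ and $\gamma>0$ small. We call $H$ \emph{$\gamma$-extremal} if there is a set $A\subseteq V(H)$ with $|A|=\lceil n-\frac{n}{a(k-\ell)}\rceil$ and $e(H[A])\le\gamma\binom{n}{k}$. This is modelled on the standard lower-bound construction: $A$ is nearly independent, and every edge of an $\ell$-cycle must meet $V\setminus A$. In the non-extremal case the bound $\delta_{k-1}(H)\ge n/(a(k-\ell))$ leaves room to spare. Following~\cite{GHX}, we then use three ingredients: an absorbing lemma (absorbing $\ell$-paths for every small set of size divisible by $k-\ell$), a reservoir/connecting lemma, and an almost-perfect tiling by $\ell$-paths obtained from a fractional tiling in the reduced hypergraph. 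None of these steps is sensitive to constant additive terms in the degree condition, so we import them from~\cite{GHX} essentially verbatim.

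The real work is the extremal case, which is where the additive $k^2/2$ in~\cite{GHX} arises. Let $B=V\setminus A$. First we clean the partition. Vertices of $A$ with many edges inside $A$ (``bad'' $A$-vertices) are moved or treated separately. Then the codegree condition gives that almost every $(k-1)$-set inside $A$ has about $|B|$ neighbours in $B$. In the target structure each edge of the Hamilton $\ell$-cycle uses exactly one vertex of $B$ and $k-1$ vertices of $A$, arranged so that the $\ell$-overlaps fit. Concretely, in blocks of $k-\ell$ new vertices per edge, one $B$-vertex is used every $a$ consecutive blocks, matching the ratio $|B|:|A|=1:(a(k-\ell)-1)$. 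The plan has three steps:
\begin{enumerate}[label=(\roman*)]
\item Find a small family of vertex-disjoint $\ell$-paths that covers all bad vertices and corrects the imbalance between $|B|$ and $\frac{n}{a(k-\ell)}$. This requires paths that either contain an edge inside $A$ or use two $B$-vertices in one edge.
\item Connect these paths through typical vertices.
\item Complete the cycle greedily or by a Hall-type argument on the typical part, where the structure is essentially complete bipartite-like.
\end{enumerate}

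The main obstacle is step (i) at the exact threshold. If $|B|$ is smaller than required, say $|B|=\frac{n}{a(k-\ell)}-t$, then the codegree condition forces each $(k-1)$-subset of $A$ to have at least $t$ neighbours inside $A$. We must turn this into about $t$ disjoint edges (or $\ell$-paths) that each ``save'' one $B$-vertex. In the range $\ell\ge 3k/4$ the overlap $\ell$ is large, so a single extra $A$-heavy edge disturbs many consecutive edges of the cycle. This means we need the edges of $H[A]$ to be extendable into $\ell$-paths of length about $a$ using only the typical bipartite structure.

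Our approach is as follows. Take a maximal collection of disjoint edges in $H[A]$. If it has fewer than $t$ edges, then its vertex set $U$ has size less than $kt$. Choose a $(k-1)$-set in $A\setminus U$ whose $A$-neighbours must then lie in $U$, and pick it so that a counting/averaging argument gives a contradiction. This may require choosing $(k-1)$-sets that share $\ell$-sets with earlier ones, so that the extra edges form short $\ell$-paths rather than isolated edges. This refinement of the Gan--Han--Xu argument, which produces exactly $t$ compatible paths rather than $t-k^2/2$, is where we expect the difficulty to lie. The complementary case, where $|B|$ is too large, is handled symmetrically using edges with two $B$-vertices, which exist by the codegree of $(k-1)$-sets meeting $B$.
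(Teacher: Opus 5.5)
There is a genuine gap. Your reduction is the same as the paper's: the non-extremal case is imported from Gan--Han--Xu, and all the new work lies in the extremal case. But you stop exactly where the content of the theorem begins, and the mechanism you sketch does not supply what is needed.

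I use the paper's convention below: $B$ is the large nearly independent side and $A$ the small side, the reverse of yours. At the exact threshold, the Gan--Han--Xu completion lemma requires $sq$ vertex-disjoint $\ell$-paths inside $B'$, where $q=|A\cap B'|$. Each path must have length at most $s$ and two \emph{linkable} ends. For an end $(v_1,\dots,v_\ell)$, linkable means every set $\{v_{(i-1)(k-\ell)+1},\dots,v_\ell\}$, $i\in[s-1]$, is $\eps_1$-good, i.e.\ has small degree into $B$. This is what lets the paths be linked into the cycle, since an end meets the next $s-1$ cycle edges.

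A maximal matching in $H[B']$ plus averaging only gives disjointness. That is the easy estimate: a used vertex lies in $B'$, so it kills at most $\eps_1\binom{N}{k-1}$ of the available edges. It gives no control over the ends. A priori the few edges inside $B$ may sit on the $\eps_1$-bad sets, so neither a matching edge nor a short path glued from such edges need be linkable. ``Choosing $(k-1)$-sets that share $\ell$-sets with earlier ones'' is not an argument that produces good ends. You leave this step open yourself, and it is the whole of the paper's path-supply theorem.

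The missing idea is a dichotomy on good $(k-1)$-sets.

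\begin{itemize}
\item \textbf{Set-up.} Let $\mathcal{B}$ be the $(k-1)$-subsets of $B'$ containing an $\eps_1$-bad set, so $|\mathcal{B}|=O(\eps_1^3+\eps_2)\binom{N}{k-1}$. Every $(k-1)$-set $K\subseteq B\setminus U$ with $K\notin\mathcal{B}$ has at least $\delta_{k-1}(H)-|V\setminus B'|\ge |A|-(|A|-q)=q$ neighbours in $B'$. This is where the exact bound is used, with no slack. After discarding edges through $U$, at least $\frac{q}{2k}\binom{N}{k-1}$ edges of $H[B'\setminus U]$ contain such a $K$.

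\item \textbf{Two good subsets.} Suppose such an edge $e$ has two subsets $e\setminus\{u\}, e\setminus\{v\}\notin\mathcal{B}$. Ordering $e$ with $u$ first and $v$ last gives a one-edge $\ell$-path with both ends linkable.

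\item \textbf{One good subset.} Otherwise each such edge is $K\cup\{v\}$ with $K$ its unique good $(k-1)$-subset. Then $R\cup\{v\}\in\mathcal{B}$ for every $(k-2)$-set $R\subset K$. Let $\mathcal{Q}_v$ be the $(k-1)$-graph of these $K$.
  \begin{itemize}
  \item If some $\mathcal{Q}_v$ contains a tight path with $k$ edges, insert $v$ in its middle. This gives a tight path with $k$ edges in $H[B'\setminus U]$. Its edges starting at positions $1+j(k-\ell)$, $0\le j\le s-1$, form an $\ell$-path of length $s$. Its ends lie in the first and last good $(k-1)$-sets, so they are linkable.
  \item If no $\mathcal{Q}_v$ contains such a path, the shadow bound $|\mathcal{F}|\le (k-1)|\partial\mathcal{F}|$ applies. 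It gives $\sum_v|\mathcal{Q}_v|\le (k-1)\sum_v|\partial\mathcal{Q}_v|\le (k-1)^2|\mathcal{B}|$. This contradicts the lower bound $\frac{q}{2k}\binom{N}{k-1}$, since $q\ge 1$.
  \end{itemize}
\end{itemize}

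Iterating this greedily $sq$ times, with $|U|<2ksq$ throughout, gives the required family. The completion lemma then finishes the proof.
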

The bound in Theorem~\ref{thm:main} is sharp. Indeed, let $|A|=\left\lceil\frac{n}{(k-\ell)\lceil k/(k-\ell)\rceil}\right\rceil-1 $, and let \(H\) consist of all \(k\)-sets intersecting \(A\). Then \(\delta_{k-1}(H)=|A|\), while $H$ contains no Hamilton \(\ell\)-cycle.

Our proof employs the extremal/non-extremal framework developed by Gan, Han and Xu~\cite{GHX}. The main novelty of this paper lies in a refined path-supply argument tailored to the extremal case, which establishes that the sharp codegree condition forces the existence of a family of sufficiently short, vertex-disjoint \(\ell\)-paths whose ends are linkable. Consequently, the additive term of \(k^2/2\) present in the earlier result of~\cite{GHX} is eliminated.

\section{Preliminaries}\label{sec2}
In this section, we recall the definitions and results from~\cite{GHX,HZ} that will be used in the proof. Throughout the rest of the paper, let $s=\left\lceil k/(k-\ell)\right\rceil$. Since  $k\not\equiv 0 \pmod{k-\ell}$, there is a unique integer $r\in\{1,\ldots,k-\ell-1\}$ such that $k=(s-1)(k-\ell)+r$.  For two positive quantities $\alpha$ and $\beta$, the notation $\alpha \ll \beta$ means that $\alpha$ is chosen to be a sufficiently small positive constant depending only on $\beta$. % all statements involving such constants are understood to hold provided that $\alpha$ is chosen accordingly.

\begin{definition}Let $\Delta>0$.
An \(n\)-vertex \(k\)-graph \(H\) is called \(\Delta\)-extremal if there exists a set \(B\subseteq V(H)\) such that $|B|=\left\lfloor\left(1-\frac1{s(k-\ell)}\right)n\right\rfloor$ and $e(H[B])\le \Delta n^k$.
\end{definition}

The non-extremal case is fully handled by the following result of Gan, Han, and Xu~\cite{GHX}.
\begin{lemma}[\cite{GHX}]
\label{lem:ghx-nonext}
For every fixed $k\ge3$, every $1\le\ell<k$ with $k\not\equiv 0 \pmod{k-\ell}$, and every $0<\Delta<1$, there exists $\gamma>0$ such that the following holds for all sufficiently large $n$ with $n\equiv 0 \pmod{k-\ell}$. If $H$ is an $n$-vertex $k$-graph that is not $\Delta$-extremal and satisfies
\[
\delta_{k-1}(H)\ge \left(\frac{1}{s(k-\ell)}-\gamma\right)n,
\]
then $H$ contains a Hamilton $\ell$-cycle.
\end{lemma}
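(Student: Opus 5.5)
The plan is the absorbing method. Since the codegree condition is $(\frac1{s(k-\ell)}-\gamma)n$, which is slightly \emph{below} the threshold, non-extremality must supply the missing room. I would prove four lemmas and combine them, fixing constants $1/n\ll\gamma\ll\beta\ll\alpha\ll\Delta$.

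\emph{(i) Connecting lemma.} Any two disjoint ordered $\ell$-sets can be joined by an $\ell$-path with $O(1)$ vertices. Moreover, they can be joined through a reservoir $R$ of size about $\alpha^2 n$, chosen at random so that every $(k-1)$-set keeps codegree about $\frac{|R|}{s(k-\ell)}$ inside $R$. The connecting step only needs codegree linear in $n$. The idea is to extend one vertex at a time from the ordered $\ell$-set; positive codegree gives $\Omega(n)$ choices at each step. From this one gets many ``tight-ish'' walks between any two $\ell$-sets, and a counting and supersaturation argument gives $\Omega(n^{c})$ connecting paths of bounded length.

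\emph{(ii) Absorbing lemma.} There is an $\ell$-path $P_0$ with $|V(P_0)|\le \beta n$ that absorbs any set $U$ with $|U|\le \beta^2 n$ and $|U|\in (k-\ell)\mathbb Z$. For each $(k-\ell)$-set $S$, I would define an absorber as a short $\ell$-path $Q$ with two properties: $Q$ avoids $S$, and $V(Q)\cup S$ spans an $\ell$-path with the same ends. The key step is showing that each $S$ has $\Omega(n^{|V(Q)|})$ absorbers. Using codegree $\Omega(n)$, I would build the absorber by inserting $S$ as a new block between consecutive edges and then reconnecting. Then I would choose a random family of absorbers, delete overlaps, and link them with (i). By Chernoff, every $S$ keeps $\Omega(\beta n)$ absorbers in the family, so any $U$ can be absorbed greedily, $k-\ell$ vertices at a time.

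\emph{(iii) Path-cover lemma.} This is the main obstacle and the only place where non-extremality is used. After removing $P_0$ and $R$, the remaining hypergraph $H'$ still has codegree at least $(\frac1{s(k-\ell)}-2\gamma)|V(H')|$ and is not $(\Delta/2)$-extremal. The lemma asserts that all but $\beta^2 n$ vertices of $H'$ are covered by at most $O(1)$ vertex-disjoint $\ell$-paths. I would follow the H\`an--Schacht / Han--Zhao approach:
\begin{itemize}
\item Apply the weak hypergraph regularity lemma to $H'$.
\item In the reduced $k$-graph, find an almost perfect fractional tiling by a fixed $k$-graph $\mathcal Y$. The natural choice is the $k$-graph whose vertex set is split as $s(k-\ell)$ ``light'' part versus the rest, mirroring the extremal construction: $\mathcal Y$ consists of $s$ consecutive edges of an $\ell$-path with one vertex per $(k-\ell)$-block, which is exactly what makes the threshold $\frac1{s(k-\ell)}$.
\item Convert each $\mathcal Y$-copy in the reduced graph into a long $\ell$-path inside the corresponding regular clusters, via the standard blow-up-type embedding.
\end{itemize}
The hard sub-step is the tiling. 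Take a maximum $\mathcal Y$-tiling in the reduced graph. If it leaves more than $\beta$-fraction uncovered, I would use codegree and averaging to find a switching that enlarges it, unless nearly all edges meet a set of size $\approx \frac{n}{s(k-\ell)}$. That alternative means the complement $B$ of this set, of size $\approx(1-\frac1{s(k-\ell)})n$, spans $\le \Delta n^k$ edges, which is exactly $\Delta$-extremality and contradicts the hypothesis. The $\gamma$-deficit is absorbed by the $\Delta$ slack in this stability argument.

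\emph{(iv) Finishing.} I would connect $P_0$ and the $O(1)$ paths from (iii) into a cycle using (i) inside $R$. The leftover vertices, namely the uncovered part together with the unused vertices of $R$, total at most $\beta^2 n$. Their number is divisible by $k-\ell$ because $n\equiv 0\pmod{k-\ell}$ and every $\ell$-cycle segment uses a multiple of $k-\ell$ new vertices. I would absorb them with $P_0$ from (ii), which yields a Hamilton $\ell$-cycle.
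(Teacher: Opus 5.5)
The paper gives no proof of this lemma. It is quoted from \cite{GHX} and used as a black box, so the question is whether your outline stands on its own. It does not. The hypothesis $k\not\equiv 0\pmod{k-\ell}$ is never used, and the lemma is false if that hypothesis is dropped. Take $k=4$, $\ell=2$ (so $s(k-\ell)=4$) and $4\mid n$. Split $V=X\cup Y$ with $|X|\approx n/2$ odd, and let the edges be the $4$-sets meeting $X$ in an even number of vertices. Then:
\begin{itemize}
\item $\delta_3(H)\approx n/2$, and $H$ is not $\Delta$-extremal for small $\Delta$.
\item In any $2$-path, consecutive blocks satisfy $|B_j\cap X|\equiv|B_{j+1}\cap X|\pmod 2$.
\item Hence ordered $2$-sets of different $X$-parity cannot be connected, and a path can only absorb $2$-sets of its own parity.
\item There is no Hamilton $2$-cycle: its $n/2$ blocks would all have the same parity, while $|X|$ is odd and $n/2$ is even.
\end{itemize}
Your justifications of (i) and (ii) rest only on ``codegree $\Omega(n)$ gives $\Omega(n)$ choices per step'' and ``insert $S$ as a block and reconnect''. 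Both apply verbatim to this example, so they cannot be valid. In particular, extending forward one vertex at a time from an $\ell$-set never forces you to land on a prescribed target $\ell$-set. The missing ingredient is an argument that genuinely uses $r\ge 1$: because $r\ge1$, the edges of an $\ell$-path straddle the $(k-\ell)$-blocks, and this is what destroys parity-type invariants. That is the real content of the connecting and absorbing lemmas in this range.

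The bookkeeping in (iii)--(iv) is also inconsistent with your hierarchy $\gamma\ll\beta\ll\alpha$. First, deleting $P_0$ (up to $\beta n$ vertices, not chosen randomly) can lower codegrees by $\beta n\gg\gamma n$. So $H'$ need not have codegree $\bigl(\frac1{s(k-\ell)}-2\gamma\bigr)|V(H')|$. Second, the unused part of the reservoir has about $\alpha^2 n\gg\beta^2 n$ vertices, which $P_0$ cannot absorb, so step (iv) fails as written. Both problems are repairable. Only $O(1)$ connections are needed, so you can take $|R|$ much smaller than the absorbing capacity of $P_0$, and $|V(P_0)|+|R|$ much smaller than the codegree slack the path-cover lemma tolerates.

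Finally, the path-cover step, which you correctly identify as where non-extremality enters, is only asserted. The tiling object $\mathcal Y$ is not pinned down. If it is an $\ell$-path with $s$ edges, it has a one-vertex cover but $k+(s-1)(k-\ell)\neq s(k-\ell)$ vertices, so it does not encode the ratio $1/(s(k-\ell))$. The switching/stability argument that turns a deficient tiling into $\Delta$-extremality is precisely the nontrivial part of \cite{GHX}.
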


We shall focus on the extremal case. Choose a partition $V(H)=A\cup B$ such that
$|B|=\left\lfloor\left(1-\frac{1}{s(k-\ell)}\right)n\right\rfloor$, $|A|=\left\lceil\frac{n}{s(k-\ell)}\right\rceil$,
and, subject to this condition, $e(H[B])$ is minimum. For sufficiently small $\Delta$ and sufficiently large $n$, we may assume $e(H[B])\le \varepsilon_0\binom{|B|}{k}$ with $0<\varepsilon_0\ll 1$. Set $\varepsilon_1=\varepsilon_0^{1/4}$ and $\varepsilon_2=2\varepsilon_1^2$.

For \(X\subseteq V(H)\) and \(T\subseteq V(H)\) with \(|T|<k\), let
$ \deg_H(T,X):=
        \left|\left\{Y\in \binom{X}{k-|T|}: T\cup Y\in E(H)\right\}\right|$. If $T=\{v\}$, we simply write $\deg_H(v,X)$ for $\deg_H(\{v\},X)$.
Define
\[
        A'=\left\{v\in V(H):
        \deg_H(v,B)\ge (1-\varepsilon_1)\binom{|B|}{k-1}\right\},
\]
\[
        B'=\left\{v\in V(H):
        \deg_H(v,B)\le \varepsilon_1\binom{|B|}{k-1}\right\},
\]
and $V_0=V(H)\setminus(A'\cup B')$.

For \(0\le t\le \ell\), a \(t\)-set \(T\subseteq V(H)\) is called
\(\varepsilon_1\)-bad if $\deg_H(T,B)>\varepsilon_1\binom{|B|}{k-t}$;
otherwise \(T\) is called \(\varepsilon_1\)-good.  An ordered \(\ell\)-set
\(L=(v_1,\ldots,v_\ell)\) is called linkable if, for every
\(i\in[s-1]\), $\{v_{(i-1)(k-\ell)+1},v_{(i-1)(k-\ell)+2},\ldots,v_\ell\}$
is \(\varepsilon_1\)-good. The next lemma reduces the extremal case to finding a suitable collection of short paths inside \(B'\).

\begin{lemma}[\cite{GHX}]
\label{lem:ghx-completion}
Let integers \(k>\ell\ge2\) satisfy \(k\not\equiv 0\pmod{k-\ell}\), and let \(n\) be sufficiently large with \(n\equiv 0\pmod{k-\ell}\). Suppose \(H\) is a \(\Delta\)-extremal \(n\)-vertex \(k\)-graph, and let \(A\) and \(B'\) be defined as above. Put \(q=|A\cap B'|\). If \(H\) contains \(sq\) vertex-disjoint \(\ell\)-paths in \(B'\), each of length at most \(s\) and with two linkable \(\ell\)-ends, then \(H\) has a Hamilton \(\ell\)-cycle.
\end{lemma}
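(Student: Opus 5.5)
The plan is to build the Hamilton $\ell$-cycle directly around the near-extremal structure. The guiding picture is the extremal construction. Split an $\ell$-cycle on $n$ vertices into $n/(k-\ell)$ consecutive blocks of $k-\ell$ vertices. Every edge is a union of consecutive blocks covering $k$ vertices, and since $k=(s-1)(k-\ell)+r$ it meets $s$ consecutive blocks. So the cycle can be built by placing one ``$A$-type'' vertex in every $s$-th block, in a position that lies in all edges through that block. This uses exactly $n/(s(k-\ell))\approx |A|$ vertices of $A$-type. Hence the cycle can absorb exactly as many high-degree vertices as $|A|$, and the deficit must be paid for elsewhere. The number of typical $A$-type vertices is $|A'|\ge |A|-q$ roughly. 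Each of the $q$ missing ones must be compensated by a stretch of $s$ consecutive blocks that contains no $A$-vertex. Such a stretch is precisely what a path of length at most $s$ inside $B'$ provides, and this is why $sq$ such paths are assumed.

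\textbf{Step 1 (size estimates).} First I would record the routine bounds. Counting edges of $H[B]$ with $e(H[B])\le\eps_0\binom{|B|}{k}$ shows that $|A\setminus A'|,\ |B\setminus B'|,\ |V_0|\le \eps_2 n$, and that $q=|A\cap B'|$ is small. The codegree condition together with the minimality of $e(H[B])$ also gives that every vertex of $V_0$ has degree at least $\eps_1\binom{|B|}{k-1}$ into $B$. Every $\eps_1$-good set $T$ extends by many vertices of $A'$. Indeed, $T$ plus a typical $(k-1-|T|)$-subset of $B$ has codegree at least $n/(s(k-\ell))$, but it has few neighbours in $B$, so almost all of its neighbours lie in $A$, and in fact in $A'$.

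\textbf{Step 2 (covering exceptional vertices).} Each $v\in V_0$, and each vertex of $A'\cap B$, will be placed inside a short $\ell$-path with linkable ends. For $v\in V_0$ I would use its $\eps_1$-fraction of edges into $B'$, completed with vertices of $A'$ in the prescribed block positions. Vertices of $A'$ that lie in $B$ are simply used as $A$-type vertices. These paths are built greedily and made vertex-disjoint from each other and from the given $sq$ paths. This is possible because all exceptional sets have size $O(\eps_2 n)$ while the relevant degrees are $\Omega(\eps_1 n^{k-1})$.

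\textbf{Step 3 (linking and final counting).} Next I would string all these paths into one $\ell$-cycle. Consecutive linkable ends are joined by connecting segments of a few blocks. In each segment the $A$-type positions are filled by fresh vertices of $A'$, using the extension property from Step~1: linkability of an end means exactly that every suffix that must be extended by the next $A'$-vertex is $\eps_1$-good. The remaining $B'$ positions are filled with typical vertices of $B'$.

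Before the last phase I would adjust the segment lengths, which have bounded flexibility modulo $s$, so that the leftover sets $A^*\subseteq A'$ and $B^*\subseteq B'$ satisfy $|B^*|=(s(k-\ell)-1)|A^*|$ exactly. The $sq$ paths inside $B'$ are what make this equation solvable; this is where $n\equiv0\pmod{k-\ell}$ and the exact sizes of $A$ and $B$ enter. Finally, I would close the cycle through $A^*\cup B^*$ with the periodic pattern, one $A^*$-vertex per $s$ blocks. Since every vertex of $A^*$ is adjacent to a $(1-\eps_1)$-fraction of $(k-1)$-subsets of $B$, a greedy embedding that reserves a small random set of $B^*$, or a perfect-matching argument in an auxiliary bipartite graph, completes it.

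\textbf{Main obstacle.} I expect the difficulty to be the exact bookkeeping in Step~3: all parts must have precisely the right residues and sizes at the moment the cycle is closed. The linking itself is softer; it only needs each end to be good in the nested sense encoded by linkability, which the hypothesis supplies.
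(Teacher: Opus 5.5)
The paper does not prove this lemma; it imports it from Gan--Han--Xu. Your outline has the right overall shape: cover $V_0$, link the given paths through $A'$ using linkability, and finish with the periodic one-$A$-vertex-per-edge pattern. There is, however, a genuine gap, and it sits exactly at the step you flag as the main obstacle. That step is the only place where the hypothesis on the $sq$ paths is used, so it cannot be left as ``adjust the segment lengths, which have bounded flexibility modulo $s$''. Do the count.

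When $q>0$, Lemma~\ref{lem:ghx-partition}(i) gives $B\subseteq B'$. Hence $A'\cup V_0=A\setminus B'$ has exactly $|A|-q$ vertices, and all of them must be $A$-type. Write $m=n/(k-\ell)$ for the number of edges and $c=s|A|-m\in\{0,\dots,s-1\}$. Suppose every $A$-type vertex lies in $s$ edges (one of the first $r$ positions of its block) and the only $A$-free edges are path edges. Then $m=s(|A|-q)+T$, where $T$ is the total length of the paths used, so you need $T=sq-c$ exactly.

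The given paths, however, have $T\in[sq,s^2q]$. This leaves a surplus $T-sq+c\ge c$ that can be of order $q$, not bounded. Adding or removing periods in a connecting segment changes nothing: each such $A$-vertex brings exactly $s(k-\ell)-1$ vertices of $B'$, which is the ratio the final phase demands. Discarding paths cannot fix the residue either (take all paths of length $s$ and $c\neq 0$). Your heuristic for the paths is also off. A path of length $t$ replaces only $t$ edge-covers, not one missing $A$-vertex; $sq$ paths are needed because they might all have length $1$.

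The missing idea is to use \emph{weak positions}. Since $k\not\equiv 0\pmod{k-\ell}$, we have $r\le k-\ell-1$, so every block has $k-\ell-r\ge 1$ positions lying in only $s-1$ edges. An $A$-type vertex placed there has three useful properties:
\begin{itemize}
\item every edge still contains exactly one $A$-type vertex, since a weak $A$-vertex comes $s-1$ rather than $s$ blocks after its predecessor;
\item the $s-1$ edges through a linkable end can be covered from such a position just as well;
\item the placement uses $k-\ell$ fewer vertices of $B'$.
\end{itemize}
Make exactly $w=T-sq+c$ weak placements, spread over the connecting segments; note $0\le w\le s(s-1)q+c\ll|A|$. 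This gives $m=s(|A|-q)-w+T$, which is the same as $|B^*|=(s(k-\ell)-1)|A^*|$.

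The condition $w\ge 0$, i.e.\ $T\ge sq-c$, is precisely what the hypothesis supplies. Weak placements can only reduce $B'$-consumption, so a shortfall could never be repaired; that is the real reason for requiring $sq$ paths. The same device with $w=c+s|B\setminus B'|$ handles $q=0$, where there are too many $A$-type vertices.

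Finally, for the closing matching argument the $A^*$-side degree you cite is not enough. You also need every $A$-slot of the $B^*$-skeleton to be compatible with almost all of $A^*$. This follows from the codegree condition together with $\deg_H(v,B)\le\varepsilon_1\binom{|B|}{k-1}$ for every $v\in B^*$.
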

The main new ingredient of this paper is the following result.
\begin{theorem}
        \label{thm:paths}
       Suppose \(k\ge3\), \(k/2<\ell<k\), and \(k\not\equiv 0 \pmod{k-\ell}\). Let \(H\) be an \(n\)-vertex \(k\)-graph, where \(n\equiv 0 \pmod{k-\ell}\) is sufficiently large. Assume that \(H\) is \(\Delta\)-extremal for some \(0<\Delta\ll1\), and that \(A',B',V_0\) are defined as above. If \(\delta_{k-1}(H)\ge \frac{n}{s(k-\ell)}\) and \(|A\cap B'|=q>0\), then \(H\) contains \(sq\) vertex-disjoint \(\ell\)-paths in \(B'\), each of length at most \(s\) and each with two linkable ends.
\end{theorem}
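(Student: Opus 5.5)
We begin with the standard size estimates and then build the paths greedily. Since $e(H[B])\le\varepsilon_0\binom{|B|}{k}$ and $\delta_{k-1}(H)\ge |A|-1$ roughly, counting pairs (vertex, edge through it meeting $B$ in $k-1$ vertices) gives $|A\setminus A'|\le \varepsilon_2 n$ and $|B\setminus B'|\le\varepsilon_2 n$. In particular $q=|A\cap B'|\le \varepsilon_2 n$ and $|B'|\ge |B|-\varepsilon_2 n$. Likewise, all but $O(\varepsilon_1)\binom{|B|}{t}$ of the $t$-subsets of $B$ are $\varepsilon_1$-good for each $t\le \ell$. Thus almost all ordered $\ell$-sets in $B'$ are linkable, and a vertex set of size $O(q)$ can be avoided at no cost. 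The key counting input is the minimality of $e(H[B])$ combined with the exact codegree bound. Swapping a vertex $a\in A\cap B'$ with a vertex $b\in B$ must not decrease $e(H[B])$. Hence $\deg(a,B)\ge\deg(b,B)$ for the appropriate $b$. Since $q>0$, this forces many vertices $b\in B$ to have $\deg(b,B)$ small. Together with $\delta_{k-1}(H)\ge n/(s(k-\ell))\ge |A|-1$ (note that $|A|=\lceil n/(s(k-\ell))\rceil$), every $(k-1)$-set $S\subseteq B$ has at least $|A|-1-|A\setminus A'|$... neighbours, and in fact at least $\deg(S)-|A|\ge -1+$(excess). I would therefore split into two cases. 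If $\deg(S)\ge |A|$, then $S$ has a neighbour in $B$ or in $A\cap B'$. If $\deg(S)=|A|-1$ exactly (this is possible only when $n/(s(k-\ell))$ is an integer, so $|A|=n/(s(k-\ell))$), then a direct count shows that each $(k-1)$-set in $B'$ has at least $q-1$ (resp.\ $q$) neighbours inside $B'$. Either way, every $(k-1)$-subset $S$ of $B'$ satisfies $\deg_H(S)\ge |A\setminus B'|+ q - O(1)$, which yields $\deg(S,B')\ge q-1$ generally, and $\ge q$ in the non-boundary situation.

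Next I construct the paths one at a time, $sq$ of them, each a single edge or a short path of at most $s$ edges inside $B'$. The natural plan is as follows. For each needed path, pick an $\varepsilon_1$-good configuration and extend using codegree. Each edge of an $\ell$-path adds $k-\ell$ new vertices, and a $(k-1)$-set inside $B'$ of the current partial path has neighbours in $B'$ outside the used set provided its degree into $B'$ exceeds the number of used vertices. Used vertices number $O(sq\cdot k)$, which is comparable to $q$, so codegree alone does not suffice. Instead I would use a Tur\'an/supersaturation-type argument. Suppose $H[B']$ does not contain $sq$ disjoint short $\ell$-paths with linkable ends. Take a maximum family $\mathcal P$ with $p<sq$ paths and let $U=V(\mathcal P)$, where $|U|\le (k+(s-1)(k-\ell))p=O(q)$. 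Every edge of $H[B'\setminus U]$ whose ends are linkable is itself a path of length $1$, so all edges of $H[B'\setminus U]$ have a non-linkable end. Using that almost all $\ell$-subsets are good, the number of edges of $H[B']$ is then at most $O(\varepsilon_1)|B|^k$ plus $|U|\cdot O(n^{k-1})$. Summing the bound $\deg(S,B')\ge q-1$ over $S$ gives $e(H[B'])\ge (q-1)\binom{|B'|}{k-1}/k$. Comparing these counts is too crude, however, so the argument must be local. For each path in $\mathcal P$, I would show via $\deg(S,B')\ge q$ that it can be replaced by two disjoint paths using the $\Theta(q)$ neighbourhoods together with a switching argument, contradicting maximality. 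This is where the paths of length up to $s$ enter. A $(k-1)$-set $S$ all of whose $\ge q$ neighbours lie in $U$ forces $U$ to be "dense", and paths of length $s$ allow us to pass through such neighbours.

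The main obstacle is the exact boundary case $\deg(S,B')= q-1$, together with the use of up to $s$ edges. Here the count is off by one per set, and I expect to need the minimality of $e(H[B])$ once more. I would first show that if some vertex $b\in B\setminus B'$ or some $(k-1)$-set has a surplus, then an edge of $H[B]$ through it can be used. Otherwise $H$ is essentially the extremal construction with $|A|$ exactly one too small, which contradicts $\delta_{k-1}(H)\ge n/(s(k-\ell))$ when combined with $n\equiv 0\pmod{k-\ell}$. Linkability of the ends is then ensured by choosing the first and last $\ell$ vertices from good sets, since bad sets are sparse (an $O(\varepsilon_1)$ fraction), and this is compatible with the greedy choices because each step has $\Omega(n)$ options for the ends.
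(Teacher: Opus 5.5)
Your proposal has a genuine gap. You correctly observe that the global comparison is too crude, but you never supply an argument that actually produces a path with two linkable ends.

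The edges guaranteed by the codegree condition number only about $\frac{q}{k}\binom N{k-1}$, where $N=|B|$. That is a small fraction of all $k$-sets, and a vanishing one when $q=o(N)$, while your upper bound on edges with a non-linkable end is of order $\eps_1 N^k$. So ``bad sets are sparse'' tells you nothing about these particular edges, and the claim of $\Omega(n)$ options per step is unjustified. A good $(k-1)$-set $K\subseteq B\setminus U$ may have only $q$ neighbours in $B'$ (possibly $q=1$). For such a neighbour $v$, nothing you establish rules out that every $(k-1)$-subset of $K\cup\{v\}$ containing $v$ contains an $\eps_1$-bad set. In that situation no edge through $v$ is a one-edge path with linkable ends, and your ``switching argument'' replacing a path of $\mathcal P$ by two disjoint ones is not specified in any form that could handle it.

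Meanwhile, the case you identify as the main obstacle does not exist. $\delta_{k-1}(H)$ is an integer at least $n/(s(k-\ell))$, hence at least $\lceil n/(s(k-\ell))\rceil=|A|$. You also need $B\subseteq B'$ (Lemma~\ref{lem:ghx-partition}(i)); the estimate $|B\setminus B'|\le\eps_2 n$ alone would not give a positive degree into $B'$. With these, every $(k-1)$-set $K\subseteq B'$ has $\deg_H(K,B')\ge q$, and no further use of minimality of $e(H[B])$ is needed.

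The missing idea is a link/shadow count on the scale $N^{k-1}$ rather than $N^k$.
\begin{enumerate}
\item Restrict to edges of $H[B'\setminus U]$ that contain a good $(k-1)$-set $K\subseteq B\setminus U$. Discard the pairs $(K,u)$ with $u\in U$; this is your ``$U$ is dense'' remark made quantitative, since $\deg_H(u,B)\le\eps_1\binom N{k-1}$ for $u\in B'$. At least $\frac{q}{2k}\binom N{k-1}$ such edges remain.
\item If one of these edges $e$ has two good $(k-1)$-subsets $e\setminus\{u\}$ and $e\setminus\{v\}$, order it with $u$ first and $v$ last. This gives a one-edge path with linkable ends.
\item Otherwise, for each $v$, the good sets $K$ with $K\cup\{v\}$ such an edge form a $(k-1)$-graph $\mathcal Q_v$. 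Suppose $\mathcal Q_v$ contains a tight path $y_1,\dots,y_{2k-2}$ with edges $K_1,\dots,K_k$. Inserting $v$ between $y_{k-1}$ and $y_k$ gives a tight path of $k$ edges in $H$. The edges starting at positions $1+j(k-\ell)$, $0\le j\le s-1$, then form an $\ell$-path of length $s$ whose ends lie inside $K_1$ and $K_k$, so both ends are linkable. This is where paths of length $s$ are genuinely needed: they keep $v$ away from both ends.
\item If no $\mathcal Q_v$ contains such a path, Lemma~\ref{lem:shadow} gives $|\mathcal Q_v|\le(k-1)|\partial\mathcal Q_v|$. Each $R\in\partial\mathcal Q_v$ satisfies $R\cup\{v\}\in\mathcal B$, since otherwise the edge would have a second good $(k-1)$-subset. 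Hence $\sum_v|\mathcal Q_v|\le(k-1)^2|\mathcal B|=O(\eps_1^3+\eps_2)\binom N{k-1}$, which contradicts the lower bound $\frac1{2k}\binom N{k-1}$.
\end{enumerate}
Nothing in your proposal plays the role of this tight-path-in-the-link versus shadow dichotomy, and that dichotomy is what makes the exact threshold work.
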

The following facts, taken from \cite[Claims~3.1 and~3.2]{HZ} and \cite[Fact~3.4]{GHX}, will be useful.

\begin{lemma}[\cite{HZ,GHX}]
\label{lem:ghx-partition}
Under the above setup, the following hold:
\begin{enumerate}[label=\textup{(\roman*)},leftmargin=2em]
    \item If $A\cap B'\ne\varnothing$, then $B\subseteq B'$.
    \item $|A\setminus A'|, |B\setminus B'|, |A'\setminus A|, |B'\setminus B|\le \eps_2 |B|$ and $|V_0|\le 2\eps_2|B|$.
    \item For every $0\le t\le \ell$, the number of $\eps_1$-bad $t$-sets contained in $B$ is at most
    $\eps_1^3\binom{|B|}{t}$.
\end{enumerate}
\end{lemma}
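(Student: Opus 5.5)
The statement to prove is Lemma~\ref{lem:ghx-partition}. All three parts follow from double counting against the two facts we have: the bound $e(H[B])\le\eps_0\binom{|B|}{k}$, and the minimality of $e(H[B])$ among sets of the given size. Throughout, $\eps_1=\eps_0^{1/4}$, so $\eps_0/\eps_1=\eps_1^3$, which is much smaller than $\eps_2=2\eps_1^2$.

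\textbf{Part (iii).} Fix $0\le t\le\ell$ and count pairs $(T,e)$ with $T\subseteq e$, $|T|=t$ and $e\in E(H[B])$. This gives
\[
\sum_{T\in\binom{B}{t}}\deg_H(T,B)=\binom{k}{t}e(H[B])\le \binom{k}{t}\eps_0\binom{|B|}{k}.
\]
We have $\binom{k}{t}\binom{|B|}{k}=\binom{|B|}{t}\binom{|B|-t}{k-t}\le\binom{|B|}{t}\binom{|B|}{k-t}$. Each bad $t$-set contributes more than $\eps_1\binom{|B|}{k-t}$ to the sum. Hence the number of bad $t$-sets is less than $(\eps_0/\eps_1)\binom{|B|}{t}=\eps_1^3\binom{|B|}{t}$. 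For $t=0$ this just says $e(H[B])\le\eps_1\binom{|B|}{k}$, which holds.

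\textbf{Part (i).} Suppose $u\in A\cap B'$ and, for contradiction, some $v\in B\setminus B'$ exists, so $\deg_H(v,B)>\eps_1\binom{|B|}{k-1}$. Swap them: let $B^*=(B\setminus\{v\})\cup\{u\}$, which has the same size as $B$. Then
\[
e(H[B^*])=e(H[B])-\deg_H(v,B)+\deg_H(u,B\setminus\{v\})\le e(H[B])-\deg_H(v,B)+\deg_H(u,B)<e(H[B]),
\]
because $\deg_H(u,B)\le\eps_1\binom{|B|}{k-1}<\deg_H(v,B)$. This contradicts the minimality of $e(H[B])$, so $B\subseteq B'$.

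\textbf{Part (ii), the set $B\setminus B'$.} Taking $t=1$ in the count above gives $\sum_{v\in B}\deg_H(v,B)\le k\eps_0\binom{|B|}{k}\le\eps_0|B|\binom{|B|}{k-1}$. Every $v\in B\setminus B'$ contributes more than $\eps_1\binom{|B|}{k-1}$, so $|B\setminus B'|\le\eps_1^3|B|\le\eps_2|B|$.

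\textbf{Part (ii), the set $A\setminus A'$.} Here we use the codegree condition. Every $S\in\binom{B}{k-1}$ satisfies $\deg_H(S,A)\ge\delta_{k-1}(H)-\deg_H(S,B)\ge|A|-1-\deg_H(S,B)$. Summing over $S$:
\[
\sum_{v\in A}\deg_H(v,B)=\sum_{S\in\binom{B}{k-1}}\deg_H(S,A)\ge(|A|-1)\binom{|B|}{k-1}-k\,e(H[B])\ge\bigl(|A|-1-\eps_0|B|\bigr)\binom{|B|}{k-1}.
\]
Each $v\in A$ contributes at most $\binom{|B|}{k-1}$. Each $v\in A\setminus A'$ contributes less than $(1-\eps_1)\binom{|B|}{k-1}$. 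Therefore $\eps_1|A\setminus A'|\le1+\eps_0|B|$. Since $|B|$ is linear in $n$, this gives $|A\setminus A'|\le2\eps_1^3|B|\le\eps_2|B|$.

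\textbf{Part (ii), the remaining sets.} Because $\eps_1<1-\eps_1$, the sets $A'$ and $B'$ are disjoint. So $A'\setminus A=A'\cap B\subseteq B\setminus B'$ and $B'\setminus B=B'\cap A\subseteq A\setminus A'$, and both inherit the bound $\eps_2|B|$. Finally, $V_0$ is disjoint from $A'\cup B'$, so $V_0\subseteq(A\setminus A')\cup(B\setminus B')$, giving $|V_0|\le2\eps_2|B|$.

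\textbf{Main obstacle.} The only delicate point is the bound on $|A\setminus A'|$. There we must use the exact codegree bound, $\delta_{k-1}(H)\ge|A|-1$ rather than an approximate version. We also need the loss $k\,e(H[B])$ to be only $O(\eps_0)$ relative to $|A|\binom{|B|}{k-1}$; this holds because $|B|/|A|$ is bounded by a constant depending only on $k$ and $\ell$. The rest is routine counting.
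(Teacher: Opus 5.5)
Your proof is correct and is essentially the standard argument behind the facts that the paper imports without proof from Han--Zhao (Claims~3.1 and~3.2) and Gan--Han--Xu (Fact~3.4): a swap against the minimality of $e(H[B])$ for (i), double counting against $e(H[B])\le\eps_0\binom{|B|}{k}$ together with the codegree condition for (ii), and Markov-type averaging for (iii). The only inaccuracy is in your closing remark: the $|A\setminus A'|$ count works equally well under an approximate bound $\delta_{k-1}(H)\ge|A|-\gamma n$ with $\gamma\ll\eps_1^3$, so the exactness of the codegree threshold is not what is needed there.
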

We shall also need the following simple shadow bound for hypergraphs with no long tight path. For a family $\mathcal F$ of $r$-sets, its shadow is defined by
\[
\partial \mathcal F
=
\left\{
S\in \binom{V(\mathcal F)}{r-1}:
S\subseteq F \text{ for some } F\in \mathcal F
\right\}.
\]
\begin{lemma}
\label{lem:shadow}
Let $r\ge2$ and let $\calF$ be an $r$-uniform hypergraph.  If $\calF$ contains no tight path with $k$ edges, then
\[
        |\calF|\le (k-1)|\partial\calF|.
\]
\end{lemma}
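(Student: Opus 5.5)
The plan is a deletion-and-charging argument. We strip away edges of $\calF$ in batches, and each batch is charged to a distinct member of $\partial\calF$ that receives at most $k-1$ edges. If the process does not exhaust $\calF$, the part that survives has large minimum codegree, and from it we build a tight path with $k$ edges greedily.

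First, the deletion procedure. Set $\calF_0=\calF$. Given $\calF_i$, suppose there is an $(r-1)$-set $S_i\in\partial\calF$ whose degree in $\calF_i$ lies in $\{1,\dots,k-1\}$. Then let $\calF_{i+1}$ be $\calF_i$ minus all edges containing $S_i$. This removes at most $k-1$ edges. After step $i$, the degree of $S_i$ is $0$, and degrees never increase. So $S_i$ can never be chosen again, the sets $S_i$ are pairwise distinct members of $\partial\calF$, and the procedure terminates at some family $\calF^*$. Consequently
\[
|\calF|-|\calF^*|\le (k-1)\,|\{S_i\}|\le (k-1)|\partial\calF|.
\]
It therefore suffices to show that $\calF^*=\varnothing$.

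Second, the structure of a nonempty $\calF^*$. Take any $(r-1)$-set $S$ contained in an edge of $\calF^*$. Then $S\in\partial\calF$, and its degree in $\calF^*$ is at least $1$. Since the procedure stopped, this degree is not in $\{1,\dots,k-1\}$, so it is at least $k$.

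Third, the greedy path construction. Pick $e_1\in\calF^*$ and order its vertices as $v_1,\dots,v_r$. Suppose we have built a tight path $v_1,\dots,v_{r+j-1}$ with $j\le k-1$ edges in $\calF^*$. Its last $r-1$ vertices form a set $S$ lying in an edge of $\calF^*$, so $S$ has at least $k$ neighbours $v$ with $S\cup\{v\}\in\calF^*$. The used vertices outside $S$ number $(r+j-1)-(r-1)=j\le k-1<k$. Hence some neighbour $v_{r+j}$ is unused. Appending it yields a tight path with $j+1$ edges: the new edge consists of the last $r$ consecutive vertices and meets the previous edge in exactly $S$. Iterating until $j=k$ produces a tight path with $k$ edges in $\calF^*\subseteq\calF$, a contradiction. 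Hence $\calF^*=\varnothing$, and so $|\calF|\le(k-1)|\partial\calF|$.

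The only step needing care is the charging. We must make sure each shadow set is charged at most once, and that the stopping condition yields codegree at least $k$ (not merely at least $1$) for every $(r-1)$-subset of a surviving edge. Both follow from the monotonicity of degrees under deletion, as noted above.
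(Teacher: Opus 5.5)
Your proof is correct and is essentially the paper's argument. In both, edges through low-degree shadow sets are deleted, with at most $k-1$ edges charged to each distinct member of $\partial\calF$, and a tight path with $k$ edges is then grown greedily in the surviving family, where every shadow set has codegree at least $k$. Your restriction to degrees in $\{1,\dots,k-1\}$, together with the monotonicity remark, makes explicit that each shadow set is charged at most once, which the paper's contradiction-based write-up leaves implicit.
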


\begin{proof}
Suppose, for a contradiction, that $ |\mathcal F|>(k-1)|\partial\mathcal F|$.  We iteratively delete all edges containing a shadow \(S\in\partial\mathcal F\) of  degree at most \(k-1\). The process leaves a non-empty subgraph \(\mathcal G\subseteq \mathcal F\) satisfying $\deg_{\mathcal G}(S)\ge k$ for every $S\in\partial\mathcal G$.  

We now greedily construct a tight path in \(\mathcal G\). Suppose that, for some \(1\le j\le k-1\), we have constructed distinct vertices $x_1,\ldots,x_{r+j-1}$ such that $ \{x_i,\ldots,x_{i+r-1}\}\in\mathcal G$ for all $1\le i\le j$. Set $S=\{x_{j+1},\ldots,x_{j+r-1}\}$. Since \(\deg_{\mathcal G}(S)\ge k\), there exists an edge \(S\cup\{y\}\in\mathcal G\) with $y\notin\{x_1,\ldots,x_{r+j-1}\}$. Put \(x_{r+j}=y\). Repeating the argument yields distinct vertices $x_1,\ldots,x_{r+k-1}$ with $\{x_i,\ldots,x_{i+r-1}\}\in\mathcal G$ for every $1\le i\le k$, which is a  tight path of length $k$,  contradicting our initial assumption. Therefore, the inequality $ |\mathcal F|\le (k-1)|\partial\mathcal F|$ must hold. 
\end{proof}

\section{Proof of Theorem~\ref{thm:paths}}
Throughout the proof, we adopt the setup established in Section~\ref{sec2}.   Let $N=|B|$ and $q=|A\cap B'|$.
Since \(q>0\), Lemma~\ref{lem:ghx-partition} implies that  $B'=B\cup(A\cap B')$, and consequently  $|B'\setminus B|=q\le \eps_2N$.

We now construct the desired paths greedily. Suppose that fewer than \(sq\) paths have been selected so far, and let \(U\) denote the set of vertices occupied by them. Each chosen path has length at most \(s\), and therefore contains at most \(k+(s-1)(k-\ell)\le 2k\) vertices.  It follows that \(|U|<2ksq\). Our goal is to show that \(H[B'\setminus U]\) still contains an \(\ell\)-path of length at most \(s\) with two linkable ends.

Let $\mathcal{B}$ be the set of all $(k-1)$-subsets of $B'$, which contain an $\eps_1$-bad subset.
\begin{claim}\label{lem:bad-face-bound}
 $|\mathcal{B}\cap\binom B{k-1}|\le
        2^{k-1}\eps_1^3\binom N{k-1}$ and $|\mathcal{B}|
        \le
        \left(2^{k-1}\eps_1^3+2^k k\eps_2\right)
        \binom N{k-1}$.
\end{claim}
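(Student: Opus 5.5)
We bound the two quantities by a union bound over the possible bad subsets. A $(k-1)$-set $S\subseteq B'$ lies in $\mathcal B$ exactly when some subset $T\subseteq S$ with $|T|=t\le \ell$ is $\eps_1$-bad. Note that $k-1\ge \ell$, and a set of size larger than $\ell$ is never called bad. Counting pairs $(T,S)$ with $T\subseteq S$ and $T$ bad therefore gives
\[
|\mathcal B\cap\tbinom{B}{k-1}|\le \sum_{t=0}^{\ell} \#\{\text{bad } t\text{-sets in } B\}\cdot\binom{N-t}{k-1-t}.
\]
By Lemma~\ref{lem:ghx-partition}(iii), the number of bad $t$-sets in $B$ is at most $\eps_1^3\binom Nt$. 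We then use the identity
\[
\binom Nt\binom{N-t}{k-1-t}=\binom{N}{k-1}\binom{k-1}{t}.
\]
Summing over $t\le \ell\le k-1$ gives at most $\eps_1^3\binom N{k-1}\sum_t\binom{k-1}t\le 2^{k-1}\eps_1^3\binom N{k-1}$, which is the first bound.

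The case $t=0$ needs a separate check: it concerns whether the empty set is bad. If $\emptyset$ were bad, then (iii) applied with $t=0$ would give at most $\eps_1^3<1$ bad $0$-sets, which is impossible. Hence $\emptyset$ is good, and the counting above is consistent.

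For the second bound, split $\mathcal B$ into the sets contained in $B$ and those meeting $B'\setminus B$. The first part is handled by the bound just proved. For the second part we can ignore badness and simply count all $(k-1)$-subsets of $B'$ meeting $B'\setminus B$. Since $B'=B\cup(A\cap B')$ and $|B'\setminus B|=q\le\eps_2N$ (both by Lemma~\ref{lem:ghx-partition}), the number of such sets is at most
\[
q\binom{|B'|-1}{k-2}\le \eps_2 N\binom{(1+\eps_2)N}{k-2}.
\]

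The only step needing care is comparing this with $2^k k\eps_2\binom N{k-1}$. We have $N\binom{M}{k-2}\le N\cdot M^{k-2}/(k-2)!$ with $M\le 2N$. Also, for $N$ large, $\binom N{k-1}\ge N^{k-1}/(2(k-1)!)$. Together these give
\[
N\binom{M}{k-2}\le 2^{k-2}\cdot 2(k-1)\binom N{k-1}\le 2^{k}k\binom N{k-1}.
\]
Adding the two parts yields $|\mathcal B|\le (2^{k-1}\eps_1^3+2^kk\eps_2)\binom N{k-1}$.
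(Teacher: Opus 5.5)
Your proof is correct and follows essentially the same route as the paper. You use a union bound over $\eps_1$-bad $t$-subsets of $B$ via Lemma~\ref{lem:ghx-partition}(iii) and the identity $\binom Nt\binom{N-t}{k-1-t}=\binom{k-1}{t}\binom N{k-1}$, then a crude count $\eps_2N\binom{(1+\eps_2)N}{k-2}$ of the $(k-1)$-sets meeting $B'\setminus B$. Your check that $\emptyset$ is good, and your explicit verification that $\eps_2N\binom{(1+\eps_2)N}{k-2}\le 2^kk\eps_2\binom N{k-1}$, fill in details the paper leaves implicit.
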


By Lemma~\ref{lem:ghx-partition}(iii), the number of \(\eps_1\)-bad \(t\)-subsets \(T\subseteq B\) is at most \(\eps_1^3\binom Nt\) for each $1\le t\le\ell$.  Thus the number of $(k-1)$-subsets \(K\subseteq B\) containing at least an \(\eps_1\)-bad subset
 is bounded above by
\[
        \sum_{t=1}^{\ell}\eps_1^3\binom Nt\binom{N-t}{k-1-t}
        \le
        \eps_1^3\sum_{t=0}^{k-1}\binom{k-1}{t}\binom N{k-1}
        =
        2^{k-1}\eps_1^3\binom N{k-1}.
\]
Since $|B'\setminus B|=q\le \eps_2N$, we have
\[
     |\mathcal{B}|\le 2^{k-1}\eps_1^3\binom N{k-1}+  \eps_2N\binom{(1+\eps_2)N}{k-2}
        \le (2^{k-1}\eps_1^3+2^k k\eps_2)\binom N{k-1}.
\]
This completes the proof of Claim 3.1.
\medskip

We shall use the following observation.

\begin{claim}\label{clm:centered-path}
Let \(v\in B'\setminus U\), and let \(\calL_v\) be the $(k-1)$-graph with vertex set $B'\setminus (U\cup\{v\})$ and edge set $N_H(v)\cap ({B'\setminus U\choose k-1}\setminus\mathcal{B})$. 
%\((k-1)\)-graph such that every\(K\in E(\calL_v)\) satisfies \(K\subseteq B'\setminus U\), \(K\cup\{v\}\in E(H)\), and \(K\notin\mathcal{B}\). 
If \(\calL_v\) contains a tight path of length $k$, then
\(H[B'\setminus U]\) contains an \(\ell\)-path of length \(s\) with two linkable ends.
\end{claim}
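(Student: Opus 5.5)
The plan is to build the $\ell$-path so that $v$ lies in every one of its $s$ edges, and to read the remaining vertices off the tight path in $\calL_v$ in order. Number the positions of an $\ell$-path with $s$ edges as $1,\dots,k+(s-1)(k-\ell)$, so that its $i$-th edge occupies positions $(i-1)(k-\ell)+1,\dots,(i-1)(k-\ell)+k$. Since $k=(s-1)(k-\ell)+r$ with $r\ge1$, all $s$ edges contain the positions $(s-1)(k-\ell)+1,\dots,k$, and there are $r\ge 1$ of them.

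\textbf{Placing $v$ and the tight path.} I will place $v$ at position $p:=k-r+1=(s-1)(k-\ell)+1$. The remaining $m:=k+(s-1)(k-\ell)-1$ positions, listed in order, will be filled by a sequence $x_1,\dots,x_m$. Deleting $p$ from each window of $k$ consecutive positions leaves $k-1$ consecutive $x$'s, namely $x_{(i-1)(k-\ell)+1},\dots,x_{(i-1)(k-\ell)+k-1}$.

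Now take the tight path of length $k$ in $\calL_v$. It has vertex sequence $y_1,\dots,y_{2k-2}$, and every $k-1$ consecutive $y_j$ form an edge of $\calL_v$. Since $(s-1)(k-\ell)=k-r\le k-1$, we have $m\le 2k-2$, so I can set $x_j=y_j$ for $j\le m$.

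\textbf{Checking the $\ell$-path.} Each set $\{x_{(i-1)(k-\ell)+1},\dots,x_{(i-1)(k-\ell)+k-1}\}$ is an edge of $\calL_v$, so adding $v$ gives an edge of $H$. All vertices are distinct and lie in $B'\setminus U$, because the vertex set of $\calL_v$ is $B'\setminus(U\cup\{v\})$. Consecutive edges meet in exactly $\ell$ positions by construction. Hence this is an $\ell$-path of length $s$ in $H[B'\setminus U]$.

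\textbf{Linkability of the ends.} This is the step that needs care.
\begin{itemize}[leftmargin=2em]
\item \emph{First end.} Write it as $(v_1,\dots,v_\ell)$, where $v_j$ occupies position $\ell-j+1$. The set $\{v_{(i-1)(k-\ell)+1},\dots,v_\ell\}$ then occupies positions $1,\dots,\ell-(i-1)(k-\ell)$. These are all at most $\ell$, and $\ell<k-r<p$ because $r<k-\ell$. So the set avoids $v$ and lies inside the first edge minus $v$, which is a $(k-1)$-set in $E(\calL_v)$.
\item \emph{Last end.} Symmetrically, its relevant sets lie among the last $\ell$ positions. These start at $s(k-\ell)+1>p$, so the sets lie inside the last edge minus $v$, again an edge of $\calL_v$.
\end{itemize}
Edges of $\calL_v$ are by definition not in $\mathcal B$, so none of their subsets is $\eps_1$-bad. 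Every set above has size at most $\ell$ (and is nonempty, since $(s-2)(k-\ell)<\ell$), so each is $\eps_1$-good. Hence both ends are linkable.

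The only genuine obstacle is this index bookkeeping: checking that $v$ never falls inside an end-set, which is exactly the inequality $\ell<k-r$ coming from $r<k-\ell$, and that $m\le 2k-2$, so the given tight path is long enough.
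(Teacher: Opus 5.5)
Your proof is correct and follows essentially the same route as the paper. Both arguments put $v$ into every one of the $s$ edges, take the $i$-th edge to be $\{y_{(i-1)(k-\ell)+1},\dots,y_{(i-1)(k-\ell)+k-1}\}\cup\{v\}$, and get linkability because each end avoids $v$ and lies inside an edge of $\calL_v$, which is not in $\mathcal{B}$. The only difference is cosmetic: the paper inserts $v$ at position $k$ (forming a $k$-edge tight path in $H$ and extracting every $(k-\ell)$-th edge), whereas you place it directly at position $k-r+1$; both positions lie in the $r$ positions common to all $s$ edges, so the two constructions produce the same edge sets and the same ends.
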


Let the tight path in \(\calL_v\) have ordered vertices
\(y_1,\ldots,y_{2k-2}\) and  edges $K_i=\{y_i,\ldots,y_{i+k-2}\}$, for $1\le i\le k$.
Inserting \(v\) between \(y_{k-1}\) and \(y_k\) yields an ordering \(x_1,\ldots,x_{2k-1}\) with \(x_k=v\), such that
\[
\{x_i,\ldots,x_{i+k-1}\}=K_i\cup\{v\}\in E(H)
\]
for every \(1\le i\le k\). Consequently, these \(k\) edges form a tight path in \(H[B'\setminus U]\).
From this tight path, we extract the edges whose starting positions are
 $1+j(k-\ell)$ for $0\le j\le s-1$. Since \(k=(s-1)(k-\ell)+r\) with \(1\le r\le k-\ell-1\), all these starting positions are at most \(k\). Therefore the selected \(s\) edges are well-defined and form an \(\ell\)-path.

It remains to verify that both ends of this 
$\ell$-path are linkable.   The first end lies
before \(x_k=v\), while the last end starts at
$s(k-\ell)+1>k$.  Since
\(K_i\notin\mathcal{B}\), every subset of each end 
is \(\eps_1\)-good.  Hence both ends are linkable, as required. 
This completes the proof of Claim 3.2.

\begin{claim}\label{clm:one-step}
\(H[B'\setminus U]\) contains an \(\ell\)-path of length
at most \(s\) with two linkable ends.
\end{claim}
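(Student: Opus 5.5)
By Claim~\ref{clm:centered-path}, it suffices to find one vertex $v\in B'\setminus U$ whose link $\calL_v$ contains a tight path with $k$ edges. By Lemma~\ref{lem:shadow} applied with $r=k-1$, this holds as soon as
\[
|\calL_v|>(k-1)|\partial\calL_v|.
\]
Since $\partial\calL_v\subseteq\binom{B'\setminus U}{k-2}$, we have $|\partial\calL_v|\le\binom{|B'|}{k-2}=O(N^{k-2})$. So the plan is to show that some $v$ has $|\calL_v|\ge cN^{k-2}\cdot k$ for a suitable large constant $c$; in fact we will aim for $|\calL_v|=\Omega(N^{k-1})$, or at least of order $N^{k-2}$ with a sufficiently large constant.

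The natural candidate is $v\in A\cap B'$, which is nonempty because $q>0$. I would choose such a $v$ not in $U$. This is possible if $q>|U\cap A|$. If instead all of $A\cap B'$ is used up, I would pick $v\in B\setminus U$ instead, and the same counting below applies. To lower bound $|\calL_v|$, I sum the codegree condition over $(k-2)$-sets. For every $(k-2)$-set $S\subseteq B'\setminus U$ with $S\cup\{v\}$ a $(k-1)$-set, we have $\deg_H(S\cup\{v\})\ge n/(s(k-\ell))\approx|A|$. The neighbours of $S\cup\{v\}$ lie either in $B'\setminus U$, in $U$, or outside $B'$, and the last part has size at most $|A'|+|V_0|\le|A|+3\eps_2N$. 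This bare count only gives about $|A|$ neighbours, which need not lie inside $B'$, so the direct estimate does not suffice.

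The real leverage comes from the exact threshold together with the minimality of $e(H[B])$. I would argue by contradiction. Suppose that for every $v\in B'\setminus U$ we have $|\calL_v|\le(k-1)\binom{|B'|}{k-2}$. Then
\[
\sum_{v}\deg_H(v,B'\setminus U)\le O(N^{k-1})+|\mathcal{B}|\cdot O(N),
\]
so $e(H[B'\setminus U])$ is small relative to what is forced. On the other hand, summing the codegree over $(k-1)$-sets inside $B'\setminus U$ gives
\[
\sum_{K}\deg(K)\ge\binom{|B'\setminus U|}{k-1}\frac{n}{s(k-\ell)}.
\]
Each $K$ has at most $n-|B'|+|U|$ neighbours outside $B'\setminus U$, and
\[
n-|B'|=|A|-q\le \frac{n}{s(k-\ell)}+1-q.
\]
Hence every $(k-1)$-set $K\subseteq B'\setminus U$ has at least $q-1-|U|$ neighbours inside. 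Since $|U|<2ksq$, this is not yet positive, which is the key subtlety. To handle it, I would count only $K$ disjoint from $U$ and note that neighbours in $U$ total at most $|U|$. The outside part is then exactly $|A\setminus B'|\le|A|-q$, so each $K\subseteq B'\setminus U$ gets at least $q-|U|$ neighbours. To fix this, I would instead use $K\subseteq B\setminus U$ and the sharper fact that $|U\cap(V\setminus B')|=0$. So the neighbours of $K$ outside $B'\setminus U$ number at most $(|A|-q)+|U|$ minus the $U$-neighbours. The point is that $\deg(K)\ge|A|$ while only $|A|-q$ vertices exist outside $B'$, so each $K$ has at least $q$ neighbours in $B'$. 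Here $|U|$ is of order $q$, not $N$, so summing over $K$ still yields
\[
e(H[B'\setminus U])\ge\Bigl(q-\tfrac{|U|}{?}\Bigr)\binom{N}{k-1}/k,
\]
which is where I expect the main obstacle. I would first try to avoid the loss from $U$ by averaging. Each vertex $u\in U$ lies in at most $\binom{N}{k-2}$ of the relevant $(k-1)$-sets' neighbourhoods, so neighbours in $U$ contribute at most $|U|\binom{N}{k-1}$ in total over all $K$, with each $K$ counted once per $u$. That is too lossy. The refinement is to use the vertex $v\in A\cap B'\setminus U$ directly. We have $\deg_H(v,B)\le\eps_1\binom N{k-1}$, but $v$ has $\sum_{S}\deg(S\cup\{v\})\ge\binom{N}{k-2}|A|$, and each such edge leaves $B'$ only through at most $|A|-q$ vertices. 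So $v$ has at least $q\binom{N}{k-2}$ link edges in $B'$, minus $|U|\binom{N}{k-2}$ for edges through $U$. That still leaves $q-|U|$.

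My final plan is therefore to take $v$ ranging over all of $(A\cap B')\setminus U$ together with $B\setminus U$ and use a double count. Summing $|\calL_v|$ over $v\in B'\setminus U$ counts $k\cdot e(H[B'\setminus U])$ edges, minus those hitting $\mathcal{B}$; by Claim~\ref{lem:bad-face-bound} the latter number at most $O(\eps_1^3+\eps_2)N^{k}$. Combining the per-$(k-1)$-set lower bound $q$ (the exact threshold) with the per-vertex contribution gives some $v$ with $|\calL_v|\ge (q/N)\cdot\Omega(N^{k-1})-O(\eps_1^3N^{k-1})$. Controlling this against $(k-1)\binom{N}{k-2}$, and controlling the $U$-loss, is the crux. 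I expect to need the minimality of $e(H[B])$ to transfer edges and guarantee a $v$ whose link inside $B'\setminus U$ has size $\gg kN^{k-2}$.
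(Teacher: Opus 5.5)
\textbf{There is a genuine gap, and the route you commit to cannot work when $q$ is small.} You try to find a single $v$ with $|\calL_v|>(k-1)\binom{|B'|}{k-2}$, using only the trivial bound on $|\partial\calL_v|$. The exact threshold only forces each good $K\subseteq B\setminus U$ to have about $q$ neighbours in $B'$. So $\sum_v|\calL_v|$ is only guaranteed to be about $q\binom N{k-1}$, and averaging gives $|\calL_v|\approx\frac{q}{k-1}\binom N{k-2}$. That falls short of $(k-1)\binom N{k-2}$ when $q$ is small.

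Minimality of $e(H[B])$ does not repair this. Let $H$ consist of all $k$-sets meeting $A\setminus\{a\}$ together with a Steiner system $S(k-1,k,N+1)$ on $B\cup\{a\}$, whenever one exists. By regularity of the design this is consistent with the minimal choice of $B$. Then:
\begin{itemize}
\item $\delta_{k-1}(H)=|A|$ and $q=1$;
\item no subset of $B'$ is $\eps_1$-bad, so $\mathcal B=\emptyset$;
\item for every $U$, any two edges of $\calL_v$ share at most $k-3$ vertices, since otherwise some $(k-1)$-set would lie in two blocks.
\end{itemize}
So no link contains even a tight path with two edges, and Claim~\ref{clm:centered-path} never applies.

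What is missing is the paper's dichotomy. If some edge $e$ of $H[B'\setminus U]$ has two $(k-1)$-subsets $e\setminus\{u\},e\setminus\{v\}\notin\mathcal B$, then $e$ itself, ordered with $u$ first and $v$ last, is an $\ell$-path of length $1$ with linkable ends. This is exactly what happens in the example above. Otherwise every edge of $H_U^*$ has a unique $(k-1)$-subset outside $\mathcal B$. Then every $R\in\partial\mathcal Q_v$ satisfies $R\cup\{v\}\in\mathcal B$, so $\sum_v|\partial\mathcal Q_v|\le(k-1)|\mathcal B|=O(\eps_1^3+\eps_2)\binom N{k-1}$. If no $\mathcal Q_v$ contains a tight path with $k$ edges, Lemma~\ref{lem:shadow} summed over all $v$ gives $e(H_U^*)\le(k-1)^2|\mathcal B|$. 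This contradicts $e(H_U^*)\ge\frac q{2k}\binom N{k-1}$. It is this bound on the \emph{total} shadow, not the per-vertex trivial one, that works for every $q\ge1$.

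\textbf{You also never resolved the loss from $U$.} Your per-set bound $q-|U|$ can be negative, and your global estimate $|U|\binom N{k-1}$ is too weak. The fix is that $U\subseteq B'$, so every $u\in U$ has $\deg_H(u,B)\le\eps_1\binom N{k-1}$. Hence the number of pairs $(K,u)$ with $K\in\calC_U$, $u\in U$ and $K\cup\{u\}\in E(H)$ is at most $|U|\eps_1\binom N{k-1}\le 2ksq\eps_1\binom N{k-1}$. This is a negligible fraction of $q|\calC_U|$, and it yields $e(H_U^*)\ge\frac q{2k}\binom N{k-1}$. You used the small $B$-degree of vertices in $B'$ only for the prospective centre $v$, not for the vertices of $U$, which is where it is needed.
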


Let
$\calC_U={B\setminus U\choose k-1}-\mathcal{B}$.
  %   =        \left\{        K\in\binom{B\setminus U}{k-1}:        K\notin\mathcal{B}        \right\}$.
Since $|U|<2ksq\le 2ks\eps_2N$, by Claim~\ref{lem:bad-face-bound}
for sufficiently large \(N\), we have
\begin{equation}
        \label{eq:CU-lower}
|\calC_U|
        \ge
        \binom N{k-1}
        -
        |U|\binom N{k-2}
        -
        2^{k-1}\eps_1^3\binom N{k-1}\ge
        \left(1-4k^2s\eps_2-2^{k-1}\eps_1^3\right)
        \binom N{k-1}.
\end{equation}
For every \(K\in\calC_U\), since $B'=B\cup(A\cap B')$, we have
\begin{equation}
\label{eq:q-neighbor}
        \deg_H(K,B')
        \ge
        \delta_{k-1}(H)-|V\setminus B'|
        \ge
        |A|-(|A|-q)
        =
        q.
\end{equation}
Recall that for $x\in B'$,  \(\deg_H(x,B)\leq\eps_1\binom N{k-1}\). Since \(U\subseteq B'\), 
the number of pairs \((K,u)\) with \(K\in\calC_U\), \(u\in U\), and
\(K\cup\{u\}\in E(H)\) is at most
\begin{equation}\label{del_num}
\sum_{u\in U}\deg_H(u,B)
        \le
        |U|\eps_1\binom N{k-1}
        \le
        2ksq\eps_1\binom N{k-1}.
\end{equation}

Let \(H_U^*\) be the \(k\)-graph on vertex set \(B'\setminus U\) whose edges are precisely those \(e \in E(H[B'\setminus U])\) for which \(\binom{e}{k-1} \cap \mathcal{C}_U \neq \emptyset\).
Then by \eqref{eq:CU-lower}-\eqref{del_num}, we have
\begin{align*}
        e(H_U^*)
        \ge
        \frac1k
        \left(q|\calC_U|-2ksq\eps_1\binom N{k-1}\right) 
        \ge
        \frac qk
        \left(1-4k^2s\eps_2-2^{k-1}\eps_1^3-2ks\eps_1\right)
        \binom N{k-1}.                                  
\end{align*}
Taking $\eps_0$ sufficiently small, we have
\begin{equation}
\label{eq:Hstar-lower}
        e(H_U^*)\ge \frac q{2k}\binom N{k-1}
        \ge \frac1{2k}\binom N{k-1}.
\end{equation}

If some \(e\in E(H_U^*)\) contains two distinct \((k-1)\)-subsets
\(e\setminus\{u\}\) and \(e\setminus\{v\}\) with
\(e\setminus\{u\}\notin\mathcal{B}\) and \(e\setminus\{v\}\notin\mathcal{B}\),
then every \(\eps_1\)-bad subset of \(e\) of size at most \(\ell\) must contain
both \(u\) and \(v\).  Order the vertices of \(e\) with \(u\) first and \(v\) last.
Then \(e\) itself forms a one-edge \(\ell\)-path whose first end avoids \(v\)
and whose last end avoids \(u\).  Hence both ends are linkable, and we are
done.

Hence we may assume that every edge of \(H_U^*\) contains a unique
\((k-1)\)-subset not in $\mathcal{B}$.  For each \(v\in B'\setminus U\), let \(\mathcal{Q}_v\) be the
\((k-1)\)-graph with vertex set $B'\setminus (U\cup\{v\})$ and edge set $\mathcal{C}_U\cap N_{H_U^*}(v)$. %consisting of all \(K\in\calC_U\) such that  K\cup\{v\}\in E(H_U^*)$.
Then 
\begin{align}\label{eq:5}
e(H_U^*)=\sum_{v\in B'\setminus U}|\mathcal{Q}_v|.
\end{align}

If for some \(v\in B'\setminus U\) the auxiliary graph \(\mathcal{Q}_v\) contains a tight path of length \(k\), then Claim~\ref{clm:centered-path} immediately yields the desired \(\ell\)-path. 
We may therefore assume that no such tight path exists in any \(\mathcal{Q}_v\). Applying Lemma~\ref{lem:shadow} to each \(\mathcal{Q}_v\), we obtain
for every $v\in B'\setminus U$, we have 
\begin{align}\label{eq:6}
    |\mathcal{Q}_v|\le (k-1)|\partial\mathcal{Q}_v|.
\end{align}
For every \(R\in\partial\mathcal{Q}_v\), choose \(K\in\mathcal{Q}_v\) with \(R\subseteq K\).
Then \(K\cup\{v\}\in E(H_U^*)\).  The set \(R\cup\{v\}\) must contain an
\(\eps_1\)-bad subset of size at most \(\ell\); otherwise \(K\cup\{v\}\) would
contain two distinct \((k-1)\)-subsets not in $\mathcal{B}$.  Thus for every $v\in B'\setminus U$, and every $R\in \partial\mathcal{Q}_v$, we have \(R\cup\{v\}\in\mathcal{B}\). 
Since each $(k-1)$-set in  $\mathcal{B}$  has exactly 
$k-1$ distinct $(k-2)$-subsets, we have
\begin{align}\label{eq:7}
\sum_{v\in B'\setminus U}|\partial\mathcal{Q}_v|\leq (k-1)|\mathcal{B}|
\end{align}
Therefore, by (\ref{eq:5}), we have
\begin{align*}
        e(H_U^*)&   =
        \sum_{v\in B'\setminus U}|\mathcal{Q}_v| \\
      & \le
        (k-1)\sum_{v\in B'\setminus U}|\partial\mathcal{Q}_v|\quad\mbox{(by (\ref{eq:6}))} \\
    &\le (k-1)^2 |\mathcal{B}|\quad\mbox{(by (\ref{eq:7}))}\\
       & \le
        (k-1)^2
        \left(2^{k-1}\eps_1^3+2^k k\eps_2\right)
        \binom N{k-1} \quad\mbox{ (by Claim~\ref{lem:bad-face-bound})}.
\end{align*}
This contradicts the lower bound in \eqref{eq:Hstar-lower}. Hence the desired \(\ell\)-path exists. This completes the proof of Claim 3.3. 

Claim~\ref{clm:one-step} establishes the required greedy step: whenever fewer than \(sq\) paths have been selected, we can always find an additional \(\ell\)-path in \(B'\setminus U\) of length at most \(s\), whose two ends are linkable. Iterating this procedure \(sq\) times yields \(sq\) vertex-disjoint \(\ell\)-paths in \(B'\), each of length at most \(s\) and with two linkable ends. This completes the proof of Theorem~\ref{thm:paths}. \qed

\section{Proof of Theorem~\ref{thm:main}}
\label{sec:proof-main}

Choose \(0<\Delta\ll 1\) sufficiently small so that both Lemma~\ref{lem:ghx-completion} and Theorem~\ref{thm:paths} are applicable. For this fixed \(\Delta\), let \(\gamma>0\) be the constant provided by Lemma~\ref{lem:ghx-nonext}. Finally, select \(n_0\) large enough that Lemmas~\ref{lem:ghx-nonext},~\ref{lem:ghx-completion} and Theorem~\ref{thm:paths} all hold for every \(n\ge n_0\).
 
Now let \(n\ge n_0\), and let \(H\) be an \(n\)-vertex \(k\)-graph with \(n\equiv 0\pmod{k-\ell}\) and \(\delta_{k-1}(H)\ge \frac{n}{s(k-\ell)}\). If \(H\) is not \(\Delta\)-extremal, then Lemma~\ref{lem:ghx-nonext} immediately yields a Hamilton \(\ell\)-cycle in \(H\).

We may therefore assume that \(H\) is \(\Delta\)-extremal. Let \(V(H)=A\cup B\) be the extremal partition chosen as in Section~\ref{sec2}, and define \(A'\), \(B'\), and \(V_0\) accordingly. Put \(q=|A\cap B'|\). If \(q=0\), then the hypothesis of Lemma~\ref{lem:ghx-completion} is vacuously satisfied (with zero paths required), and hence \(H\) already contains a Hamilton \(\ell\)-cycle.

It remains to handle the case \(q>0\). By Theorem~\ref{thm:paths}, there exist \(sq\) vertex-disjoint \(\ell\)-paths in \(B'\), each of length at most \(s\) and with two linkable ends. Applying Lemma~\ref{lem:ghx-completion} then yields a Hamilton \(\ell\)-cycle in \(H\) as well.
This completes the proof of Theorem~\ref{thm:main}.
\qed
\bibliographystyle{plain}
\bibliography{ref}

\end{document}